\newtheorem{theorem}{Theorem}[section]
\newtheorem{lemma}[theorem]{Lemma}
\newtheorem{claim}[theorem]{Claim}
\theoremstyle{definition}
\newcommand{\w}{\omega}
\newcommand{\IP}{\mathbb P}
\newcommand{\bigvid}{\hat{\ \ }}
\newcommand{\uhr}{\upharpoonright}
\newcommand{\la}{\langle}
\newcommand{\ra}{\rangle}
\newcommand{\hot}{\mathfrak}
\newcommand{\nothing}[1]{}
\title[A countably compact group with the non-countably pracompact square]
{A countably compact topological group with the non-countably pracompact square}
\author{Serhii Bardyla, Alex Ravsky, and Lyubomyr Zdomskyy}
\address{Universit\"at Wien,
Institut f\"ur Mathematik, Kurt G\"odel Research Center
 Augasse 2-6, UZA 1 - Building 2, 1090 Wien, Austria.}
\email{sbardyla@yahoo.com}
\urladdr{http://www.logic.univie.ac.at/$\tilde{\ }$bardylas55/}
\address{Department of Analysis, Geometry and Topology,
Pidstryhach Institute for Applied Problems of Mechanics and Mathematics,
National Academy of Sciences of Ukraine, Naukova 3-b, Lviv, 79060, Ukraine.}
\email{alexander.ravsky@uni-wuerzburg.de}
\address{Universit\"at Wien,
Institut f\"ur Mathematik, Kurt G\"odel Research Center,
 Augasse 2-6, UZA 1 - Building 2, 1090 Wien, Austria.}
\email{lzdomsky@gmail.com}
\urladdr{http://www.logic.univie.ac.at/$\tilde{\ }$lzdomsky/}
\subjclass[2010]{Primary: 22A05, 54H11, 54B10, 54G20. Secondary: 54A35.}
\keywords{Countably compact topological group,
countably pracompact space, Martin's Axiom.}
\thanks{The first and third authors were partially supported by
the Austrian Science Fund FWF Grant I 3709-N35.  The third author also thanks
the Austrian Science Fund FWF (Grant I 2374-N35) for generous support for this research.\\
The main part of it was made during the visit of the second
author at the Institute for Mathematics (KGRC) at the University of Vienna in December 2019,
supported by the above FWF grants. The second author thanks KGRC for the hospitality.}
\begin{document}
\begin{abstract}
Under Martin's Axiom we construct a Boolean countably compact
topolo\-gi\-cal group whose square is not countably pracompact.
\end{abstract}

\maketitle

\section{Introduction}

Let us recall the definitions of compact-like spaces which will appear in this paper.

A space $X$ is called
\begin{itemize}
\item {\it countably compact} if each countable open cover of $X$ has a finite subcover;
\item {\it countably pracompact} if there exists a dense subset $A$ of $X$ such that each infinite subset $B\subset A$ has an accumulation point in $X$;
\item {\it feebly compact} if each locally finite family of nonempty open subsets of $X$ is finite;
\item {\it pseudocompact} if $X$ is Tychonoff and each continuous real-valued function on $X$ is bounded;
\end{itemize}

These notions relate as follows: 
$$\hbox{countably compactness}\Rightarrow\hbox{countably pracompactness}\Rightarrow\hbox{feebly compactness.}$$
Also, for Tychonoff spaces pseudocompactness is equivalent to feebly compactness.

By Tychonoff's theorem, the Tychonoff product of any family of compact spaces is compact. The
productivity of other compact-like properties can be a non-trivial problem. For instance, see ~\cite{Vau},~\cite{Ste},
 \cite{DPSW}, \cite{ScaSto}, \cite{BarRav}, and~\cite{GutRav}. In~\cite{Tka2},~\cite{Tka3}, and~\cite{Tka6} Tkachenko considered the
productivity of various properties of topological groups. 

Comfort and Ross in~\cite{ComRos} proved that the product of any family of
pseudocompact topological groups is pseudocompact.
On the other hand, Nov\'ak~\cite{Nov}  and  Teresaka~\cite{Ter}  constructed examples of two
countably  compact spaces whose  product is not pseudocompact.
This motivated Comfort to ask in a letter to Ross in 1966, whether there are countably compact topological groups
whose product is not countably compact.
This question is considered to be central in the theory of topological groups.
The first consistent positive answer was given by van Douwen~\cite{vDa80} under MA,
followed by Hart-van Mill~\cite{HarvMil} under MA$_{ctble}$.
The question has been studied extensively in recent decades, see~\cite{G-FTW, MT, ST}.
Finally, Hru\v s\'ak, van Mill, Ramos-Garc\'ia, and Shelah obtained a positive answer in ZFC,
see~\cite{HvMR-GS} for ``a presentable draft of the proof'' and more history and references.

A related question on productivity of countable pracompact paratopological groups was posed by the second author
in 2010 in the first version of the paper~\cite{BanRav}.
In the present paper we give a negative answer to this question,
constructing under MA a Boolean countably compact topological group whose square
is not countably pracompact.

\begin{theorem}\label{main}
(MA). There exists a countably compact subgroup $G$ of $2^{\hot c}$
without non-trivial convergent sequences such that $G^2$ is not countably
pracompact. 
\end{theorem}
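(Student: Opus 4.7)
My plan is to construct $G$ by a transfinite recursion of length $\cc$ under Martin's Axiom, realizing $G$ as an increasing union $G = \bigcup_{\alpha < \cc} G_\alpha$ of Boolean subgroups of $2^{\cc}$ with $|G_\alpha| < \cc$. At the base step, I fix a distinguished sequence $\{(a_n, b_n) : n \in \w\}$ in $G_0^2$ together with basic clopen neighborhoods $W_n = [s_n] \times [t_n]$ of $(a_n, b_n)$, where the finite partial functions $s_n, t_n$ have pairwise disjoint domains drawn from a reserve of ``working'' coordinates; the unused coordinates in each $W_n$ will provide the flexibility needed to witness non-pracompactness at the end.

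At stage $\alpha$, I enumerate three kinds of previously recorded threats and handle them by a single generic extension produced by MA applied to a ccc poset of size $< \cc$: (i) for countable compactness of $G$, one adds an accumulation point of a listed countable subset of $G_\alpha$; (ii) to preclude non-trivial convergent sequences, one diagonalises against a listed candidate convergent sequence by choosing a coordinate at which the newly added element separates the sequence from its putative limit; (iii) the newly introduced element is chosen so as to preserve the invariant that the family $\{W_n\}$ still ``witnesses'' non-pracompactness of the product, in the sense that for every $(x,y) \in G_{\alpha+1}^2$ one can exhibit a selection $d_n \in W_n \cap G_{\alpha+1}^2$ that avoids $(x,y)$ as a cluster point. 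Boolean closure at each step and unions at limit stages keep $G_\alpha$ a subgroup throughout.

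At the end, $G = \bigcup_{\alpha<\cc} G_\alpha$ is countably compact (by (i)), has no non-trivial convergent sequences (by (ii)), and $G^2$ is not countably pracompact: given any dense $D \subseteq G^2$, density provides $d_n \in D \cap W_n$ for each $n$, and one argues from the preserved invariant that the sequence $(d_n)$ admits no cluster point in $G^2$. Hence no dense subset of $G^2$ is relatively countably compact, as required.

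The main obstacle is task (iii). By the Comfort--Ross theorem, the countable compactness of $G$ forces $G^2$ to be pseudocompact, hence feebly compact, so the family $\{W_n\}$ cannot be locally finite in $G^2$ --- the straightforward ``discrete family'' approach is blocked. The invariant must instead exploit the fresh coordinates left unconstrained by $s_n, t_n$: every element added to $G$ is required to differ from each prospective cluster point on some such reserve coordinate. Sustaining this flexibility throughout the $\cc$-many stages while simultaneously installing the accumulation points demanded by countable compactness is the technical heart of the proof, and it is precisely where MA is used, by bundling, at each stage, the $< \cc$-many constraints stemming from all three tasks into a single ccc forcing whose generic filter realizes the desired extension.
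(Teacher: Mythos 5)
The high-level skeleton of your recursion (a length-$\mathfrak c$ increasing chain of Boolean subgroups of $2^{\mathfrak c}$, with MA applied at each stage to a ccc poset of size $<\mathfrak c$ to install accumulation points and preserve earlier commitments) is indeed how the paper proceeds. The gap is in your item (iii) and the final argument, i.e.\ precisely at the heart of the theorem. Your invariant asserts that for every $(x,y)\in G_{\alpha+1}^2$ there exists \emph{some} selection $d_n\in W_n\cap G_{\alpha+1}^2$ not clustering at $(x,y)$; but in the last step the selection is handed to you by the arbitrary dense set $D$ (you have no control over which point of $W_n\cap G^2$ lies in $D$), and that one fixed selection must avoid \emph{every} $(x,y)\in G^2$ simultaneously. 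The quantifiers do not match: what you would actually have to maintain is that \emph{every} selection from the fixed family $\{W_n\cap G^2\}$ is closed and discrete in $G^2$. You neither state nor prove that strengthening, and it is not available: it would require, for each $(x,y)$, a coordinate $\epsilon$ at which $x,y$ vanish while, for almost all $n$, \emph{all} of $[s_n]\cap G$ (or all of $[t_n]\cap G$) takes value $1$; since $[s_n]\cap G$ is a coset of the finite-index subgroup $\{g\in G:g\uhr\mathrm{dom}(s_n)=0\}$, this forces $g\mapsto g(\epsilon)$ to depend only on $g\uhr\mathrm{dom}(s_n)$ for infinitely many $n$ with pairwise disjoint domains, which (given the richness of projections your setup needs for the $W_n$ to be genuine, independent constraints) makes that character trivial and contradicts the requirement. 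Your own Comfort--Ross remark points at this obstruction, but the appeal to ``reserve coordinates'' does not circumvent it.

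The paper escapes by reversing the order of quantification: it does not fix witnessing neighborhoods in advance, but instead enumerates all $\omega$-sequences $\langle\langle i_n,j_n\rangle:n\in\omega\rangle$ of pairs of indices so that each appears cofinally often, and whenever such a sequence names a linearly independent family $\{x_{i_n},x_{j_n}:n\in\omega\}$ it adds, for \emph{every} pair $(u,v)$ of indices, a fresh coordinate $\epsilon$ with $x_u(\epsilon)=x_v(\epsilon)=0$ while $\max\{x_{i_n}(\epsilon),x_{j_n}(\epsilon)\}=1$ for all but finitely many $n$ (condition $(v)$, realized by the ccc poset of Lemma~\ref{main_l} simultaneously with the clustering demands of countable compactness). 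Given an arbitrary dense $Z\subseteq G^2$, one then \emph{extracts} from $Z$ a countable family of pairs whose entries are linearly independent over $F_2$; the bookkeeping guarantees this family has already been made closed and discrete, so $Z$ is not relatively countably compact. The same mechanism, applied to the pairs $\langle x_{\xi_{2n}},x_{\xi_{2n+1}}\rangle$ of a putative convergent sequence, rules out nontrivial convergent sequences, whereas your sketch leaves that as a separate, unelaborated diagonalization. As written, the proposal does not prove the theorem; item (iii) needs to be reworked along these lines.
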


For a subset $A$ of $2^\alpha$, where $\alpha$ is an ordinal, we
shall denote by $[A]$ the subgroup of $2^\alpha$ generated by $A$.

\section{Proof of Theorem~\ref{main}}

In our construction we ``add an $\varepsilon$'' to that in
\cite[\S~4]{vDa80}. In particular, we essentially use the notation
from \cite{vDa80}.

Let $\la I_\alpha:\w\leq\alpha<\hot c\ra$ be an enumeration of
$[\hot c]^\w$ such that $I_\alpha\subset\alpha$ for all $\alpha<\hot
c$. Let also
$$\{\big\la\la  i_{\alpha,n},j_{\alpha,n} \ra:n\in\w\big\ra\: : \: \alpha<\hot c\}$$
be an enumeration of all sequences $\la\la i_{n},j_{n} \ra:n\in\w
\ra$ of pairs of ordinals below $\hot c$, such that each such a
sequence appears in the enumeration cofinally often.

Put $\sigma_\w=\mu_\w=\w$ and by recursion on $\omega\le \alpha<\hot c$ we shall construct
increasing sequences $\la \sigma_\alpha\ra$ and $\la\mu_\alpha \ra$ of ordinals below $\hot c$
and a subgroup $E_\alpha=\{x_{\alpha,\xi}:\xi<\sigma_\alpha\}$ of $2^{\mu_\alpha}$
such that
\begin{itemize}
\item[$(i)$] $x_{\alpha,\xi}\neq x_{\alpha,\eta}$ for any distinct $\xi,\eta<\sigma_\alpha$;
\item[$(ii)$] $x_{\beta,\xi}\uhr\mu_\alpha= x_{\alpha,\xi}$ for each $\beta>\alpha$ and $\xi<\sigma_\alpha$;
\item[$(iii)$] If $\xi,\eta,\zeta<\sigma_\alpha$ and
$x_{\alpha,\xi}+x_{\alpha,\eta}=x_{\alpha,\zeta}$, then
$x_{\beta,\xi}+x_{\beta,\eta}=x_{\beta,\zeta}$ for each $\beta>\alpha$.
\end{itemize}
We shall also recursively construct a sequence
$\la\lambda_\gamma:\gamma<\hot c\ra$ of ordinals such that
$\lambda_\gamma<\sigma_{\gamma+1}$ for all $\gamma$, so that $G=E_{\hot c}$
will be countable compact because of
\begin{itemize}
\item[$(iv)$] $x_{\alpha,\lambda_\xi}$ is a cluster point of
$\{x_{\alpha,\eta}:\eta\in I_\xi\}$ for all $\xi<\alpha$.
\end{itemize}
Let $\{\la u_\rho,v_\rho \ra:\rho<|\sigma_\alpha|\}$ be a bijective
enumeration of $\sigma_\alpha\times\sigma_\alpha$.
 The failure of the countable pracompatness of $G\times G$ will follow from the following condition:
\begin{itemize}
\item[$(v)$] Suppose that
$i_{\alpha,n},j_{\alpha,n}<\sigma_\alpha$ for all $n\in\w$ and
 the family $\{x_{\alpha,i_{\alpha,n}}, x_{\alpha,j_{\alpha,n}}:n\in\w\}$
 is linearly independent\footnote{ $2^{\mu_\alpha}$ is a linear space over
 the field $F_2=\{0,1\}$.}. Then
for every $\rho<|\sigma_\alpha|$ there exists
$\epsilon\in\sigma_{\alpha+1}\setminus\sigma_\alpha$ such that
$x_{\alpha+1,u_\rho}(\epsilon)=x_{\alpha+1,v_\rho}(\epsilon)=0$ and
$$ \max\big\{x_{\alpha+1,i_{\alpha,n}}(\epsilon),
x_{\alpha+1,j_{\alpha,n}}(\epsilon)\big\}=1\mbox{ for all but
finitely many }n\in\w.$$
\end{itemize}
The key fact allowing us for the recursive construction of  the
$E_{\alpha}$'s is the following
\begin{lemma}\label{main_l}
(MA). Suppose that $\w\leq\mu<\hot c$,  $E$ is a subgroup of $2^\mu$
of cardinality $<\hot c$,   $\{x_n,y_n:n\in\w\}$ is a linearly
independent subset of $E$, and $x',y'\in E$. Let also $\mathcal
K\subset [E]^\w$ be of size $<\hot c$. Then there exists a
homomorphism $\phi:E\to \{0,1\}$ with the following properties:
\begin{enumerate}
\item $\phi(x')=\phi(y')=0$;
\item $\forall K\in\mathcal K\: \exists x\in K\: (\phi(x)=0)$;
\item $\forall K\in\mathcal K\: \exists x\in K\: (\phi(x)=1)$;
\item $\forall^* n\in\w \: (\max\{\phi(x_n),\phi(y_n)\} =1)$.
\end{enumerate}
\end{lemma}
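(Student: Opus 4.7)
The plan is to apply Martin's Axiom to a ccc forcing $\mathbb P$ whose conditions are pairs $p = (\phi_p, F_p)$. Here $\phi_p \colon E_p \to \{0,1\}$ is a homomorphism from a finitely generated subgroup $E_p \ni x', y'$ of $E$ with $\phi_p(x') = \phi_p(y') = 0$, and $F_p \subseteq \omega$ is a finite ``exception set'' satisfying $\max\{\phi_p(x_n), \phi_p(y_n)\} = 1$ for every $n \in \omega \setminus F_p$ with $\{x_n, y_n\} \subseteq E_p$. The order is $q \leq p$ iff $\phi_q \supseteq \phi_p$ and $F_q \supseteq F_p$. I would verify ccc by a standard $\Delta$-system argument: two partial homomorphisms agreeing on the root $E_p \cap E_q$ combine canonically over $F_2$ to a common extension on $E_p + E_q$.

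Next I would introduce dense sets: for each $n \in \omega$, $D_n = \{p : \{x_n, y_n\} \subseteq E_p\}$; and for each $K \in \mathcal K$ and $i \in \{0,1\}$, $D_K^i = \{p : (\exists y \in K)\, \phi_p(y) = i\}$. The key density argument for $D_K^i$ uses the linear independence of $\{x_n, y_n : n \in \omega\}$: the ``bad'' witnesses $y \in K$ --- those whose adjunction to $E_p$ would cause a new covered pair $(x_m, y_m) \subseteq E_p + \langle y \rangle$ to violate the exception constraint for some $m \notin F_p$ --- lie in a finite union of cosets of the finite subgroup $\ker \phi_p$ in $E$, and by linear independence at most $3|\ker \phi_p|$ indices $m$ can contribute (since each of $x_m$, $y_m$, $x_m + y_m \in \ker \phi_p$ holds for only finitely many $m$), so this bad set is finite; since $|K| = \omega$, a good witness exists and can be adjoined with the desired value without enlarging $F_p$. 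Density of $D_n$ is handled by an analogous case analysis on whether $x_n$ or $y_n$ lies in $E_p$.

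The principal obstacle is condition $(4)$: ensuring that the generic exception set $F_G := \bigcup_{p \in G} F_p$ is finite, equivalently that $B_\phi := \{n : \phi(x_n) = \phi(y_n) = 0\} \subseteq F_G$ is finite. This is not automatic, since in principle the density arguments for $D_n$ may force enlarging $F_p$ at each step. My proposed resolution is to start from an initial condition $p_0$ whose $F_{p_0}$ absorbs the finitely many unavoidable exceptions coming from $\langle x', y' \rangle$, and then to work inside the ccc subposet $\{p : p \leq p_0,\ F_p = F_{p_0}\}$, inside which the density of $D_K^i$ and $D_n$ can be arranged without enlarging $F_{p_0}$ by careful use of the linear-independence hypothesis to select witnesses and values avoiding new forced exceptions. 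Applying Martin's Axiom to this subposet and to the family of $<\mathfrak c$ dense sets yields a generic filter $G$; the desired homomorphism is $\phi := \bigcup_{p \in G} \phi_p$, extended arbitrarily from $\bigcup_{p \in G} E_p$ to $E$ as a homomorphism. Properties $(1)$--$(3)$ are witnessed directly by membership in the respective dense sets, and $(4)$ holds because $B_\phi \subseteq F_{p_0}$ is finite.
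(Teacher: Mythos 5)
Your overall strategy --- finite partial homomorphisms ordered by extension, ccc via a $\Delta$-system argument, MA applied to fewer than $\cc$ dense sets --- is the same as the paper's, and your density argument for the sets $D_K^i$ is essentially sound. The genuine gap is exactly at the point you yourself flag as ``the principal obstacle'', and your proposed resolution does not work. In your poset nothing prevents a condition $p$ from having $x_m+x_n,\ y_m+x_n,\ x_{m'}+x_n,\ y_{m'}+x_n\in E_p$ with $\phi_p(x_m+x_n)=\phi_p(y_m+x_n)=1$ and $\phi_p(x_{m'}+x_n)=\phi_p(y_{m'}+x_n)=0$, while none of $x_m,y_m,x_{m'},y_{m'},x_n,y_n$ lies in $E_p$; then the exception constraint is vacuous at $m,m',n$, so $p$ is a legitimate condition with $F_p=F_{p_0}$. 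Any extension of $p$ covering the pair $(x_n,y_n)$ must decide $\phi(x_n)$, which forces $\phi(x_m)=\phi(y_m)=1+\phi(x_n)$ and $\phi(x_{m'})=\phi(y_{m'})=\phi(x_n)$; whichever value is chosen, one of the pairs $m,m'$ becomes a new exception. Hence $D_n$ is not dense in your subposet $\{q\le p_0 : F_q=F_{p_0}\}$, and since such configurations exist for infinitely many $m$, no finite $F_{p_0}$ chosen in advance can absorb them. The linear independence of $\{x_n,y_n:n\in\w\}$ does not rescue this, because the offending elements are sums such as $x_m+x_n$, which belong to $E$ in any case. (A related problem already appears for ccc: two conditions with the same $F$ may be compatible only at the cost of enlarging $F$, so the fixed-$F$ subposet does not straightforwardly inherit the $\Delta$-system argument.)

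The paper closes this gap by building the needed invariant into the definition of a condition instead of into an exception set: besides requiring $\mathrm{dom}(p)$ to be a finite subgroup containing $x',y'$ with $p(x')=p(y')=0$, it demands that $x_n\in\mathrm{dom}(p)$ iff $y_n\in\mathrm{dom}(p)$ and, crucially, that $[\{x_n,y_n : x_n\notin\mathrm{dom}(p)\}]\cap\mathrm{dom}(p)=\{0\}$. This last clause outlaws conditions like the $p$ above and guarantees that all uncovered pairs stay linearly independent over $\mathrm{dom}(p)$, so each can be adjoined with $\max\{\phi(x_n),\phi(y_n)\}=1$ and no exception set is needed at all. The only unavoidable exceptions, arising from possible dependences between $\{x',y'\}$ and the $x_n,y_n$, are removed once and for all at the outset by discarding finitely many indices so that $[\{x',y'\}]\cap[\{x_n,y_n:n\in\w\}]=\{0\}$; after that the paper proves the stronger, exceptionless form of condition $(4)$. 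Your argument becomes correct if you replace the exception-set device by these two moves.
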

\begin{proof}
Removing finitely many elements from the sequences $\la
x_n:n\in\w\ra$ and $\la y_n:n\in\w\ra$, we may assume that
$[\{x',y'\}]\cap[\{x_n,y_n:n\in\w\}]=\{0\}$. Under this assumption
we shall construct $\phi$ satisfying $(1)$-$(3)$ as well as the
following stronger version of $(4)$:
$$\forall n\in\w\:
(\max\{\phi(x_n),\phi(y_n)\} =1).$$ Let $\IP$ be the poset
consisting of  functions $p$ such that
\begin{itemize}
\item[$(A)$] $\mathrm{dom}(p)$ is a finite subgroup of $E$ and $p$ is a homomorphism from
$\mathrm{dom}(p)$ to $\{0,1\}$;
\item[$(B)$] $x',y'\in \mathrm{dom}(p)$ and $p(x')=p(y')=0$;
\item[$(C)$] $x_n\in \mathrm{dom}(p)$ iff $y_n\in \mathrm{dom}(p)$ for all
$n\in\w$;
\item[$(D)$] $\forall n\in\w\:
(x_n\in \mathrm{dom}(p)\Rightarrow\max\{p(x_n),p(y_n)\} =1)$
\item[$(E)$] $ [\{x_n,y_n: x_n\not\in \mathrm{dom}(p) \}]\cap
\mathrm{dom}(p)=\{0\}$.
\end{itemize}
\begin{claim}\label{ccc}
$\IP$ is ccc.
\end{claim}
\begin{proof}
Given any  $X\in [\IP]^{\w_1}$ and using the standard
$\Delta$-System argument we can find $Y\in [X]^{\w_1}$ such that
there exists a homomorphism $q$ from a finite subgroup of $E$ to $\{0,1\}$
with $p\cap p'=q$ for all
 $p\neq p'$ in $Y$. Let us fix any $p\in Y$ and note that
 $$E_0:=\mathrm{dom}(p)+
 [\{x_n,y_n: x_n\not\in \mathrm{dom}(p) \}]$$
is a countable subset of $E$. Therefore there exists $p'\in Y$ such
that $ E_0\cap
(\mathrm{dom}(p')\setminus\mathrm{dom}(q))=\emptyset$. Thus
\begin{eqnarray*}
\big((\mathrm{dom}(p')\setminus\mathrm{dom}(q))+\mathrm{dom}(p)\big)\cap
\big[\{x_n,y_n: x_n\not\in \mathrm{dom}(p) \}\big]=\emptyset,
\end{eqnarray*}
and hence also
\begin{eqnarray*}
\big(\mathrm{dom}(p')+\mathrm{dom}(p)\big)\bigcap \big[\{x_n,y_n:
x_n\not\in \mathrm{dom}(p')\cup \mathrm{dom}(p) \}\big]\subset\\
\subset \big(\mathrm{dom}(p')+\mathrm{dom}(p)\big)\bigcap
\big[\{x_n,y_n: x_n\not\in  \mathrm{dom}(p) \}\big] =\\
=
\big((\mathrm{dom}(p')\setminus\mathrm{dom}(q))+\mathrm{dom}(p)\big)\bigcap
\big[\{x_n,y_n: x_n\not\in \mathrm{dom}(p) \}\big] \bigcup \\
\bigcup \big(\mathrm{dom}(q)+\mathrm{dom}(p)\big)\bigcap
\big[\{x_n,y_n: x_n\not\in \mathrm{dom}(p) \}\big]=\\
=\emptyset\cup\{0\}=\{0\},
\end{eqnarray*}
which means that the unique homomorphism
$r:\mathrm{dom}(p')+\mathrm{dom}(p)\to\{0,1\}$ such that
$r\uhr\mathrm{dom}(p)=p$ and $r\uhr\mathrm{dom}(p')=p'$, satisfies
$(C)$, $(D)$, and $(E)$. It is obvious that $r$ also satisfies $(A)$
and $(B)$ and hence $r\in\IP$. Thus $r$ is a common extension of
$p,p'$ in $\IP$.
\end{proof}
\begin{claim}\label{dense1}
For every $p\in\IP$, $i\in 2$,
 $x\in E\setminus\mathrm{dom}(p)$ and $n\in\w$ such that
 $\{x_n,y_n\}\not\subset\mathrm{dom}(p)$ there exists
 $\IP\ni q\leq p$ such that $\{x,x_n,y_n\}\subset\mathrm{dom}(q)$
 and
 $q(x)=i$.
\end{claim}
\begin{proof}
If $ x\not\in [\mathrm{dom}(p)\cup\{x_k,y_k:k\in\w\}],$ then we can
first extend $p$ to $q_0$ with domain $[\mathrm{dom}(p)\cup\{x_n,y_n\}]$
such that $q_0(x_n)=q_0(y_n)=1$, and then further extend $q_0$ to
$q$ with domain $[\mathrm{dom}(q_0)\cup\{x\}]$ such that $q(x)=i$. So let
us assume that
$$ x=x_0+\sum_{j\in J}x_j+\sum_{l\in L}y_l $$
for some $x_0\in\mathrm{dom}(p)$ and finite $J,L\subset\w$ such that
at least one of them is non-empty (without loss of generality we
shall assume $J\neq\emptyset$) and $(\{x_j:j\in J\}\cup\{y_l:l\in
L\})\cap \mathrm{dom}(p)=\emptyset$. Let $q_0\leq p$ be the unique
element of $\IP$ with domain
$$[\mathrm{dom}(p)\cup\{x_n,y_n\}\cup\{x_m,y_m:m\in J\cup L\}]$$
such that $q_0\uhr(\{x_n,y_n\}\cup\{x_m,y_m:m\in J\cup L\})\equiv
1$. If $q_0(x)=i$ then $q_0$ is as required. Otherwise fix some
$j\in J$ and consider the unique element $\IP\ni q_1\leq p$  with
domain
$$[\mathrm{dom}(p)\cup\{x_n,y_n\}\cup\{x_m,y_m:m\in J\cup L\}]$$
such that $q_1(x_j)=0$ and
$$q_1\uhr\big((\{x_n,y_n\}\cup\{x_m,y_m:m\in J\cup
L\})\setminus\{x_j\}\big)\equiv 1.$$

It follows that $q_1$ is as required.
\end{proof}
Claim~\ref{dense1} immediately implies that for every $K\in\mathcal
K$,  $i\in 2$, and $x\in E$ the set
$$D_{K,i,x}=\{p\in\IP:x\in\mathrm{dom}(p)\:\wedge\:\exists x'\in K\cap \mathrm{dom}(p)\: (p(x')=i)\}$$
is dense in $\IP$. It is easy to see that if $G$ is a filter on
$\IP$ intersecting all the sets of the form $D_{K,i,x}$, then
$\phi:=\cup G$ satisfies the conditions $(1)$-$(4)$, which completes
the proof of Lemma~\ref{main_l}.
\end{proof}
\smallskip

Let us proceed now with the recursive construction of the subgroups
$E_\alpha$ fulfilling $(i)$-$(v)$. Suppose that we have already
constructed $E_\alpha$ for all $\alpha<\delta$ so that all the
relevant at this point instances of $(i)$-$(v)$ hold true. In the case
of limit $\delta$ set $\sigma_\delta=\sup_{\alpha<\delta}\sigma_\alpha$ and consider
any
$\xi<\sigma_\delta$. Let $\alpha<\delta$ be the minimal such that $\xi<\sigma_\alpha$.
Then we denote by $x_{\delta,\xi}$ the union $\bigcup_{\beta\in\delta\setminus\alpha}x_{\beta,\xi}$.
It is easy to check that the group $E_{\delta}=\{x_{\delta,\xi}: \xi<\sigma_{\delta}\}$ satisfies
conditions $(i)$-$(v)$.

So let us assume that $\delta=\alpha+1$ for some $\alpha$. If
$\max\{i_{\alpha,n},j_{\alpha,n}\}\geq\sigma_\alpha$ for some
$n\in\w$ or $\{x_{\alpha,i_{\alpha,n}},x_{\alpha,j_{\alpha,n}}:n\in\w\}$ is not
linearly independent (i.e., $(v)$ is vacuous at this stage of the
construction), then similarly as in the proof given in Case 2 on
\cite[p.~419-420]{vDa80} it can be showed that $E_{\delta}$ satisfies conditions $(i)$-$(v)$.
So let us assume that the premises in $(v)$ are satisfied and set
$\mu_{\alpha+1}=\mu_\alpha+|\sigma_\alpha|$. Recursively over
ordinals $\varepsilon\in(\mu_{\alpha+1}+1)\setminus\mu_\alpha$ we
shall first define $y_{\alpha,\xi,\varepsilon}\in 2^{\varepsilon}$
for all $\xi<\sigma_\alpha$ such that
\begin{itemize}
\item[$(i')$] $y_{\alpha,\xi,\mu_\alpha}\uhr\mu_\alpha = x_{\alpha,\xi}$
for all  $\xi<\sigma_\alpha$;
\item[$(ii')$] $y_{\alpha,\xi,\varepsilon_1}\uhr\varepsilon_0=
y_{\alpha,\xi,\varepsilon_0}$ for all
$\mu_\alpha\leq\varepsilon_0<\varepsilon_1\leq\mu_{\alpha+1}$ and
$\xi<\sigma_\alpha$;
\item[$(iii')$] If $\xi,\eta,\zeta<\sigma_\alpha$  and
$x_{\alpha,\xi}+x_{\alpha,\eta}=x_{\alpha,\zeta}$, then
$y_{\alpha,\xi,\varepsilon}+y_{\alpha,\eta,\varepsilon}=y_{\alpha,\zeta,\varepsilon}$
for all $\varepsilon\in (\mu_{\alpha+1}+1)\setminus \mu_\alpha$.
\item[$(iv')$] $y_{\alpha,\lambda_\xi,\varepsilon}$ is a cluster point of
$\{y_{\alpha,\eta,\varepsilon}:\eta\in I_\xi\}$ for all
$\xi<\alpha$.
\item[$(v')$] If $\varepsilon=\mu_\alpha+\rho$ for some $\rho<|\sigma_\alpha|$,
then
$y_{\alpha,u_\rho,\varepsilon+1}(\varepsilon)=y_{\alpha,v_\rho,\varepsilon+1}(\varepsilon)=0$
and
$$ \max\big\{y_{\alpha,i_{\alpha,n},\varepsilon+1}(\varepsilon),
y_{\alpha,j_{\alpha,n},\varepsilon+1}(\varepsilon)\big\}=1\mbox{ for
all but finitely many }n\in\w.$$
\end{itemize}
Suppose that the construction has been accomplished for all
$\varepsilon<\epsilon$. If $\epsilon$ is limit, then  letting
$y_{\alpha,\xi,\epsilon}=\bigcup_{\varepsilon<\epsilon}y_{\alpha,\xi,\varepsilon}$
for all $\xi<\sigma_\alpha$, we can easily check that $(i')$-$(v')$
are satisfied.
\smallskip

So let us assume that $\epsilon=\varepsilon+1$, where
$\varepsilon=\mu_\alpha+\rho$ with $\rho<|\sigma_\alpha|$.
Set $\mu=\varepsilon$,
$E=\{y_{\alpha,\xi,\varepsilon}:\xi<\sigma_\alpha\}$,
$x_n=y_{\alpha,i_{\alpha,n},\varepsilon}$ and
$y_n=y_{\alpha,j_{\alpha,n},\varepsilon}$ for all $n\in\w$,
$x'=y_{\alpha,u_\rho,\varepsilon}$,
$y'=y_{\alpha,v_\rho,\varepsilon}$, and
\begin{eqnarray*}
\mathcal K=\big\{\{y_{\alpha,\eta,\varepsilon}: \eta\in I_\xi\setminus\{\lambda_\xi\},
y_{\alpha,\eta,\varepsilon}\uhr
L=y_{\alpha,\lambda_\xi,\varepsilon}\uhr L \}\ :\  \xi<\alpha,
L\in[\varepsilon]^{<\w}\big\}.
\end{eqnarray*}
By Lemma~\ref{main_l} there exists a homomorphism $\phi:E\to \{0,1\}$
satisfying conditions $(1)$-$(4)$ in the formulation thereof.
For every $\xi<\sigma_\alpha$ we set
$$y_{\alpha,\xi,\varepsilon+1}=y_{\alpha,\xi,\varepsilon}
\bigvid \phi(y_{\alpha,\xi,\varepsilon})$$ and note that $(ii')$,
$(iii')$ and $(v')$ are satisfied by our construction of $\phi$.
(E.g., $(v')$ follows from conditions $(1)$ and $(4)$ in
Lemma~\ref{main_l} and our choice of $x',y',x_n,y_n$ made above,
where $n\in\w$.) To prove $(iv')$ for $\varepsilon+1=\epsilon$ let
us fix $\xi<\alpha$ and a finite subset $L'$ of $\varepsilon+1$. We
need to find $\eta\in I_\xi\setminus\{\lambda_\xi\}$ such that
\begin{equation}\label{zamykan}
y_{\alpha,\lambda_\xi,\varepsilon+1}\uhr L'=
y_{\alpha,\eta,\varepsilon+1}\uhr L'.
\end{equation}
If $\varepsilon\not\in L'$ then such $\eta\in I_\xi\setminus\{\lambda_\xi\}$
exists by $(iv')$ for $\varepsilon$, which holds by our
assumption. So suppose that $\varepsilon \in L'$ and put
$L=L'\cap\varepsilon$ and
$i=\phi(y_{\alpha,\lambda_\xi,\varepsilon})$ (so that
$y_{\alpha,\lambda_\xi,\varepsilon+1}$ becomes
$y_{\alpha,\lambda_\xi,\varepsilon}\bigvid i$). By conditions $(2)$
and $(3)$ in Lemma~\ref{main_l} there exists $x\in K$ with
$\phi(x)=i$, where
$$K:=\{y_{\alpha,\eta,\varepsilon}: \eta\in I_\xi\setminus\{\lambda_\xi\},
y_{\alpha,\eta,\varepsilon}\uhr
L=y_{\alpha,\lambda_\xi,\varepsilon}\uhr L \}\in\mathcal K. $$
Thus $x=y_{\alpha,\eta,\varepsilon}$ for some $\eta\in
I_\xi\setminus\{\lambda_\xi\}$. It is easy to check now that $y_{\alpha,\eta,\varepsilon+1}$
satisfies (\ref{zamykan}), which completes our construction of
$y_{\alpha,\xi,\varepsilon}$ for all
$\varepsilon\in(\mu_{\alpha+1}+1)\setminus\mu_\alpha$ and
$\xi<\sigma_\alpha$ satisfying $(i')$-$(v')$.
\smallskip

Now set $x_{\alpha+1,\xi}:=y_{\alpha,\xi,\mu_{\alpha+1}}$ for all
$\xi<\sigma_\alpha$ and $E_{\alpha+1}^0:=\{x_{\alpha+1,\xi}:\xi<\sigma_\alpha\}$.
Furthermore set $\sigma_{\alpha+1}:=\sigma_\alpha+\sigma_\alpha$ and
fix $c\in 2^{\mu_{\alpha+1}}\setminus E_{\alpha+1}^0 $ so that if
$\overline{\{x_{\alpha+1,\eta}:\eta\in I_\alpha\}}\not\subset
E_{\alpha+1}^0$,
 then $c\in \overline{\{x_{\alpha+1,\eta}:\eta\in
I_\alpha\}}\setminus E_{\alpha+1}^0$ (here we consider the closure
in $2^{\mu_{\alpha+1}}$).

Enumerate $c+E_{\alpha+1}^0$ bijectively
as $\{x_{\alpha+1,\xi}:\sigma_\alpha\leq\xi<\sigma_{\alpha+1}\}$ and
set
$E_{\alpha+1}=\{x_{\alpha+1,\xi}:\xi<\sigma_{\alpha+1}\}=E_{\alpha+1}^0\cup(c+E_{\alpha+1}^0)$.
Finally, if $c\in \overline{\{x_{\alpha+1,\eta}:\eta\in
I_\alpha\}},$ then let $\lambda_\alpha$ be the number
$\xi\in\sigma_{\alpha+1}\setminus\sigma_\alpha$ so that
$c=x_{\alpha+1,\xi}$. Otherwise we have
$\overline{\{x_{\alpha+1,\eta}:\eta\in I_\alpha\}}\subset
E_{\alpha+1}^0$, and hence there exists
$\lambda_\alpha<\sigma_\alpha$ such that
$x_{\alpha+1,\lambda_\alpha}$ is a cluster point of
$\{x_{\alpha+1,\eta}:\eta\in I_\alpha\}$. This completes our
construction of  $E_\alpha,$ $\alpha<\hot c$, satisfying conditions
$(i)$-$(v)$.
\smallskip

For every $\xi,\nu<\hot c$   find $\alpha<\hot c$ such that
$\xi<\sigma_\alpha$ and $\nu<\mu_\alpha$ and set
$x_\xi(\nu)=x_{\alpha,\xi}(\nu)$.
Condition $(ii)$ ensures that this
definition does not depend on the choice of $\alpha$. Let's check that
$G:=\{x_\xi:\xi<\hot c\}$ is as required in Theorem~\ref{main}.

The group $G$ is countably compact because by $(ii)$ and $(iv)$ we have that
$x_{\lambda_\xi}$ is a cluster point of $\{x_\eta:\eta\in I_\xi\}$
for all $\xi<\hot c$.

Now let $Z$ be any dense subset of $G^2$. It is easy to find
a countable subset $C:=\{\la x_{i_n},
x_{j_n}\ra:n\in\w\}$ of $Z$ such that the indexed set
$\{x_{i_n}, x_{j_n}:n\in\w\}$ is linearly independent.

We claim that $C$ is closed and discrete in $G^2$. Indeed, otherwise
$C$ has a cluster point $\la x_u,x_v\ra$ for some
$u,v<\hot c$. Since there are $\hot c$-many $\alpha$ such that
$i_n=i_{\alpha,n}$ and $j_n=j_{\alpha,n}$ for each $n\in\omega$,
we can find such $\alpha>\sup\{i_n,j_n:n\in\w\}$.
Moreover, we can choose $\alpha$ so large that $|\sigma_\alpha|>\max\{u,v\}$
and the indexed set $\{x_{\alpha,i_n}, x_{\alpha,j_n}:n\in\w\}$
is a linearly independent subset of $E_{\alpha}$. Pick
$\rho<|\sigma_\alpha|$ such that $\la u,v\ra=\la
u_\rho,v_\rho\ra$. By $(v)$ we can find
$\epsilon\in\sigma_{\alpha+1}\setminus\sigma_\alpha$ such that
$x_{\alpha+1,u_\rho}(\epsilon)=x_{\alpha+1,v_\rho}(\epsilon)=0$
(that is $x_u(\epsilon)=x_v(\epsilon)=0$) and
$$ \max\big\{x_{\alpha+1,i_{\alpha,n}}(\epsilon),
x_{\alpha+1,j_{\alpha,n}}(\epsilon)\big\}=1\mbox{ for all but
finitely many }n\in\w,$$ that is $ \max\{x_{i_{n}}(\epsilon),
x_{j_{n}}(\epsilon)\}=1$  for all these $n\in\w$. It follows that
the intersection of $C$ with
the open neighborhood
$$ \{\la x,x'\ra\in G^2 : x(\epsilon)=x'(\epsilon)=0 \}$$
of $\la x_u,x_v\ra$ is finite, a contradiction.

Finally, we claim that $G$ has only trivial convergent
sequences. Indeed, otherwise there exists an injective
sequence $\la x_{\xi_n}:n\in\w\ra\in G^\w$ converging to some
$x_\xi$. Passing to a subsequence, if necessary, we may assume that
$\{x_{\xi_n}:n\in\w\}$ is linearly independent.
Similarly to the above we can show
that $\{\la x_{\xi_{2n}}, x_{\xi_{2n+1}}\ra:n\in\w\}$ is
closed and discrete in $G^2$. On the other hand, $\la\la
x_{\xi_{2n}}, x_{2n+1}\ra:n\in\w\ra$ converges to $\la
x_\xi,x_\xi\ra$, a contradiction.

\end{document}